%
%
%

\documentclass{svproc}
%
%

\usepackage{url}

\usepackage{mathrsfs}
\usepackage{amssymb}
\usepackage[arrow, matrix, curve, xdvi, dvips]{xy}
\usepackage{tikz}
\usepackage{tikz-cd}
\newcommand{\Sp}{\rm{Sp}}
\newcommand{\U}{\rm{U}}

\begin{document}
\mainmatter              
\title{Sub-Riemannian geodesics on nested principal bundles}
\titlerunning{Geodesics on nested principal bundles}  
%
\author{Mauricio Godoy Molina\inst{1}\thanks{Partially supported by grants Fondecyt \#1181084 by the Chilean Research Council and DI20-0023 by Universidad de La Frontera.} \and Irina Markina\inst{2}\thanks{Partially supported by grant \# 262363/O70 of the Norwegian Research Council.} 
}
\authorrunning{Godoy Molina, Markina 
} 
%
\tocauthor{Godoy Molina, Markina}
\institute{Departamento de Matem\'atica y Estad\'istica\\Universidad de La Frontera\\Temuco, Chile\\
\email{mauricio.godoy@ufrontera.cl}
\and
Department of Mathematics\\University of Bergen\\
Bergen, Norway\\
\email{irina.markina@uib.no}}

\maketitle              

\begin{abstract}
{ We study the interplay between geodesics on two non-holono\-mic systems that are related by the action of a Lie group on them. After some geometric preliminaries, we use the Hamiltonian formalism to write the parametric form of geodesics. We present several geometric examples, including a non-holonomic structure on the Gromoll-Meyer exotic sphere and twistor space.}
\keywords{Sub-Riemannian geometry, Principal bundles, Submersions, Hamiltonian formalism}
\end{abstract}
\section{Introduction}

{Our paper is related to the geometric control theory of mechanical systems with symmetries. To be precise, we consider a configuration space $M$ together with a Lie group $H$ acting on $M$ which preserves some constraints on the velocities. Of particular importance are non-holonomic constraints, which are restrictions that can not be reduced to position constraints. In our model these restrictions are modelled as a smooth distribution $D$ inside the tangent bundle $TM$ of the configuration space which is transverse to the infinitesimal action of $H$. All these data are combined in a geometric structure called {\it principal bundle}. We also assume that there exists a Lie subgroup $K<H$ such that the restriction of the action of $K$ is also a principal bundle. This leads to an interaction of two non-holonomic systems. By making use of the Hamiltonian formalism we study the interplay of the geodesic curves in these non-holonomic systems. Geometric examples of this construction include the quaternionic Hopf fibration \cite{BFI}, the Gromoll-Meyer exotic sphere \cite{Gromoll} and the twistor bundle of $S^4$ \cite{AHS}.
}

\section{Nested principal bundles}

In this paper, all manifolds and Lie groups are assumed to be connected.


\begin{definition}\label{def:ppal}
Let $H$ be a Lie group. A submersion $\pi_H\colon M\to N$ is a principal $H$-bundle if $H$ acts freely and transitively from the right on the fibers $\pi_H^{-1}(n)$, $n\in N$. 
\end{definition}

We denote a principal $H$-bundle as $H\curvearrowright M\stackrel{\pi_H}{\longrightarrow}N$. Note that $N$ is diffeomorphic to the quotient $M/H$. The {\it vertical bundle} ${\mathscr{V}}\to M$ is a vector bundle defined by ${\mathscr{V}}_m=\ker d_m\pi_H$, $m\in M$.
Let ${\mathfrak{h}}$ be the Lie algebra of the group $H$.
Given a vector $\xi\in{\mathfrak{h}}$, we define the {\it fundamental vector field} on $M$ by
\[
\sigma_m(\xi)=\left.\frac{d}{dt}\right|_{t=0}m.\exp_H(t\xi),\quad m\in M,
\]
where $\exp_H\colon{\mathfrak{h}}\to H$ is the group exponential, and $m.h$ denotes the action of $h\in H$ on $m\in M$. 
Let $K<H$ be a closed Lie subgroup of $H$ with the Lie algebra $\mathfrak k$. We say that the triplet $(M,H,K)$ is a {\it nested principal bundle} if the restriction of the action to $K$ is also a principal bundle. In this case, if $H\curvearrowright M\stackrel{\pi_H}{\longrightarrow}M/H$ and $K\curvearrowright M\stackrel{\pi_K}{\longrightarrow}M/K$ are the principal $H$- and $K$-bundles respectively, we have the vertical bundles
\[
{\mathscr{V}}_H=\ker d\pi_H\cong\mathfrak{h}\times M\quad\mbox{and}\quad{\mathscr{V}}_K=\ker d\pi_K\cong\mathfrak{k}\times M.
\]
Consider two Ehresmann connections ${\mathscr{D}}_H\hookrightarrow TM$ and ${\mathscr{D}}_K\hookrightarrow TM$ for $\pi_H$ and $\pi_K$ respectively, that is
$
\ker d\pi_H\oplus{\mathscr{D}}_H=\ker d\pi_K\oplus{\mathscr{D}}_K=TM$.
Assume that the distributions ${\mathscr{D}}_H$ and ${\mathscr{D}}_K$ are invariant under the action of $H$ and $K$ respectively. The aim of the present paper is to study the sub-Riemannian structure of the triplet $(M/K,{\mathscr{D}},g_{\mathscr{D}})$, where the distribution ${\mathscr{D}}\hookrightarrow T(M/K)$ is defined by ${\mathscr{D}}=d\pi_K({\mathscr{D}}_H)$ and the metric $g_{\mathscr{D}}$ will be defined later. 

Observe that $\pi\colon M/K\to M/H$ is a submersion where a fiber is the homogeneous space $H/K$, so that we have the following diagram
\begin{equation}\label{eq:maindiag}
\xymatrix{&H\ar@/^/[d]\ar@{->}[r]&H/K\ar@{->}[d]\\K\ar@{^{(}->}[ur]\ar@/^/[r]&M\ar@{->}[r]^{\pi_K}\ar@{->}[d]_{\pi_H}&M/K\ar@{->}[dl]^{\pi}\\&M/H&}
\end{equation}
where the two triangles commute. Notice that, in principle, the space $H/K$ is just a homogeneous space and $H/K\to M/K\to M/H$ is just a fibration.
\begin{lemma}\label{lemma:Ehr}
The distribution ${\mathscr{D}}$ is an Ehresmann connection for~$\pi$.
\end{lemma}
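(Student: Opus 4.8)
The plan is to establish, fiber by fiber, the defining property of an Ehresmann connection for $\pi$, namely that $\ker d_q\pi\oplus{\mathscr{D}}_q=T_q(M/K)$ for every $q\in M/K$; recall that $\pi\colon M/K\to M/H$ has already been observed to be a submersion, so $\ker d\pi$ is a subbundle of $T(M/K)$ of corank $\dim(M/H)$.

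Before comparing the two subbundles, the first thing to check is that ${\mathscr{D}}=d\pi_K({\mathscr{D}}_H)$ is itself a well-defined smooth subbundle. From $\pi_H=\pi\circ\pi_K$ we get $d\pi_H=d\pi\circ d\pi_K$, so any $v$ with $d\pi_K(v)=0$ also has $d\pi_H(v)=0$; hence ${\mathscr{V}}_K=\ker d\pi_K\subseteq\ker d\pi_H={\mathscr{V}}_H$. Since ${\mathscr{D}}_H$ is complementary to ${\mathscr{V}}_H$ in $TM$, this gives ${\mathscr{D}}_H\cap\ker d\pi_K\subseteq{\mathscr{D}}_H\cap{\mathscr{V}}_H=\{0\}$, so $d\pi_K$ restricts to a fiberwise injection on ${\mathscr{D}}_H$. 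Moreover, the assumed $K$-invariance of ${\mathscr{D}}_H$ together with $d\pi_K\circ dR_k=d\pi_K$ for $k\in K$ shows that $d\pi_K({\mathscr{D}}_H|_m)$ depends only on $q=\pi_K(m)$ and not on the chosen point $m$ of the $K$-orbit; pushing a local frame of ${\mathscr{D}}_H$ forward along a local section of $\pi_K$ then exhibits ${\mathscr{D}}$ as a smooth subbundle of rank $\mathrm{rank}\,{\mathscr{D}}_H=\dim M-\dim{\mathfrak{h}}=\dim(M/H)$. This well-definedness step is where the standing hypotheses on ${\mathscr{D}}_H$ are genuinely used, and it is the part I would treat most carefully; the remainder is a routine fiberwise diagram chase.

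Next I would pin down the vertical bundle of $\pi$: using $d\pi\circ d\pi_K=d\pi_H$ and the surjectivity of $d\pi_K$, both inclusions give $\ker d_q\pi=d\pi_K({\mathscr{V}}_H|_m)$ for any $m\in\pi_K^{-1}(q)$. Indeed, if $\xi\in{\mathscr{V}}_H|_m$ then $d\pi(d\pi_K\xi)=d\pi_H\xi=0$; conversely, for $w\in\ker d_q\pi$ choose $v\in T_mM$ with $d\pi_K v=w$, so that $d\pi_H v=d\pi(w)=0$ and therefore $v\in{\mathscr{V}}_H|_m$.

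Finally, the splitting. For transversality, suppose $w\in\ker d_q\pi\cap{\mathscr{D}}_q$ and write $w=d\pi_K(\eta)$ with $\eta\in{\mathscr{D}}_H|_m$; then $d\pi_H(\eta)=d\pi(w)=0$, hence $\eta\in{\mathscr{V}}_H\cap{\mathscr{D}}_H=\{0\}$ and $w=0$. For spanning, apply the surjection $d\pi_K$ to the identity $TM={\mathscr{V}}_H\oplus{\mathscr{D}}_H$ to obtain $T_q(M/K)=d\pi_K({\mathscr{V}}_H|_m)+d\pi_K({\mathscr{D}}_H|_m)=\ker d_q\pi+{\mathscr{D}}_q$. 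Together these give $T(M/K)=\ker d\pi\oplus{\mathscr{D}}$, as claimed; as a consistency check the ranks match, since $(\dim{\mathfrak{h}}-\dim{\mathfrak{k}})+(\dim M-\dim{\mathfrak{h}})=\dim M-\dim{\mathfrak{k}}=\dim(M/K)$.
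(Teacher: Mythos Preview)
Your proposal is correct, and its core transversality argument is identical to the paper's: both use $\pi_H=\pi\circ\pi_K$ to show that any $w\in{\mathscr{D}}\cap\ker d\pi$ lifts to an element of ${\mathscr{D}}_H\cap\ker d\pi_H=\{0\}$. You supply additional details the paper omits---well-definedness of ${\mathscr{D}}$ as a smooth subbundle and an explicit spanning argument via $d\pi_K(TM)=d\pi_K({\mathscr{V}}_H)+d\pi_K({\mathscr{D}}_H)$---whereas the paper simply invokes dimension counting to reduce the whole claim to transversality.
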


\begin{proof}
By a dimension counting argument, it is enough to show that ${\mathscr{D}}$ is transverse to $\ker d\pi$. If $v\in{\mathscr{D}}\cap\ker d\pi$, then $v=d\pi_{K}(w)$ for some $w\in{\mathscr{D}}_H$, and $d\pi(v)=0$. This implies that
$
d\pi(d\pi_K(w))=d(\pi\circ\pi_K)(w)=0$,
by the chain rule. Since diagram (\ref{eq:maindiag}) commutes, we know that $\pi\circ\pi_K=\pi_H$, and thus $d\pi_H(w)=0$. From this, we conclude that $w\in{\mathscr{D}}_H\cap\ker d\pi_H$. By the assumption, ${\mathscr{D}}_H$ is an Ehresmann connection for $\pi_H$, and therefore $w=0$. This implies that $v=0$.\qed
\end{proof}

\begin{remark}\label{rem:iso}
Note that $d\pi_K$ gives the isomorphisms  ${\mathscr{D}}_H\cong {\mathscr{D}}$ and ${\mathscr{D}}_K\cong T(M/K)$.
It follows that the vector $w$ such that $v=d\pi_K(w)$ is unique for any $v\in{\mathscr{D}}$.
\end{remark}


\section{Hamiltonians in nested principal bundles}\label{sec:ham}


Let $g$ be a Riemannian metric on $M$, such that ${\mathscr{D}}_H={\mathscr{V}}_H^\bot$ and ${\mathscr{D}}_K={\mathscr{V}}_K^\bot$ with respect to $g$, and such that $g$ is invariant under the action of $H$ (thus also invariant under $K$). It follows that ${\mathscr{D}}_H$ is $H$-invariant and ${\mathscr{D}}_K$ is $K$-invariant. 

For each $m\in M$, the restriction $g|_{{\mathscr{V}}_H}$ defines the positive definite symmetric bilinear form on the Lie algebra ${\mathfrak{h}}$
\[
{\mathbb{I}}^H_m(\xi,\eta)=g|_{{\mathscr{V}}_H}\big(\sigma_m(\xi),\sigma_m(\eta)\big),\quad m\in M,\quad\xi,\eta\in{\mathfrak{h}}.
\]
We require that ${\mathbb{I}}^H_m$ does not depend on $m\in M$. 
According to the terminology in \cite[Chapter 11]{M}, the metric $g$ satisfying all of these hypotheses is called of constant bi-invariant type 
%
with respect to both group actions. The metrics $g_{{\mathscr{D}}_K}$ and $g_{{\mathscr{D}}_H}$, obtained by restriction, are sub-Riemannian metrics on $M$ for the distributions ${\mathscr{D}}_K$ and ${\mathscr{D}}_H$, respectively. 

Define the Riemannian metrics $g_{M/K}$ and $g_{M/H}$ by the equalities
\begin{eqnarray}
g(v,w)=&g_{M/K}(d_m\pi_K(v),d_m\pi_K(w)),\quad v,w\in{\mathscr{D}}_K,\quad m\in M,\label{eq:MK}\\
g(v,w)=&g_{M/H}(d_m\pi_H(v),d_m\pi_H(w)),\quad v,w\in{\mathscr{D}}_H,\quad m\in M.\label{eq:MH}
\end{eqnarray}

\begin{proposition}
The map $\pi\colon M/K\to M/H$ is a Riemannian submersion, with respect to the metrics (\ref{eq:MK}) and (\ref{eq:MH}), respectively.
\end{proposition}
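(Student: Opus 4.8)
The plan is to show that $d\pi$ restricted to the horizontal space $\mathscr{D}$ is a linear isometry onto the tangent space of $M/H$ at each point, which is exactly the defining property of a Riemannian submersion (once we know, from Lemma~\ref{lemma:Ehr}, that $\mathscr{D}$ is the horizontal distribution for $\pi$). So fix $q = \pi_K(m) \in M/K$ and a vector $\bar v \in T_q(M/K)$. First I would reduce everything to computations on $M$: by Remark~\ref{rem:iso}, $d\pi_K$ restricts to an isomorphism $\mathscr{D}_H \cong \mathscr{D}$, so given $\bar v \in \mathscr{D}_q$ there is a unique $w \in (\mathscr{D}_H)_m$ with $d_m\pi_K(w) = \bar v$. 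The key identity is the commutativity $\pi \circ \pi_K = \pi_H$ from diagram~(\ref{eq:maindiag}), which gives $d\pi(\bar v) = d\pi(d_m\pi_K(w)) = d_m\pi_H(w)$.

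Next I would chain the two metric-defining equations. Since $w \in \mathscr{D}_H$, equation~(\ref{eq:MH}) gives $g_{M/H}(d_m\pi_H(w), d_m\pi_H(w)) = g(w,w)$. On the other hand, $w$ also lies in $\mathscr{D}_K$: indeed $\mathscr{D}_H = \mathscr{V}_H^\bot \subseteq \mathscr{V}_K^\bot = \mathscr{D}_K$ because $\mathscr{V}_K \subseteq \mathscr{V}_H$ (as $\mathfrak{k} \subseteq \mathfrak{h}$, so $\ker d\pi_K \subseteq \ker d\pi_H$). Hence equation~(\ref{eq:MK}) applies to $w$ and yields $g(w,w) = g_{M/K}(d_m\pi_K(w), d_m\pi_K(w)) = g_{M/K}(\bar v, \bar v)$. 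Combining, $g_{M/H}(d\pi(\bar v), d\pi(\bar v)) = g_{M/K}(\bar v, \bar v)$, and by polarization the same holds for the full bilinear forms. This shows $d\pi|_{\mathscr{D}}$ is an isometry onto its image.

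Finally I would check surjectivity of $d\pi|_{\mathscr{D}_q} \to T_{\pi(q)}(M/H)$: given any $\bar u \in T_{\pi(q)}(M/H)$, pick $w' \in (\mathscr{D}_H)_m$ with $d_m\pi_H(w') = \bar u$ (possible since $\mathscr{D}_H$ is an Ehresmann connection for $\pi_H$, so $d\pi_H$ maps it onto $T(M/H)$), and set $\bar v = d_m\pi_K(w') \in \mathscr{D}_q$; then $d\pi(\bar v) = d_m\pi_H(w') = \bar u$. Together with the isometry property and Lemma~\ref{lemma:Ehr}, this establishes that $\pi$ is a Riemannian submersion.

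I do not expect a serious obstacle here; the one point that needs care is the inclusion $\mathscr{D}_H \subseteq \mathscr{D}_K$, which is what makes both metric equations~(\ref{eq:MK}) and~(\ref{eq:MH}) simultaneously available for the same vector $w$ — this is really where the hypothesis that $g$ is of constant bi-invariant type with respect to \emph{both} actions (in particular $\mathscr{D}_H = \mathscr{V}_H^\bot$ and $\mathscr{D}_K = \mathscr{V}_K^\bot$ for the \emph{same} $g$) is used. One should also confirm that the vector $w$ produced from $\bar v$ is independent of the choice of lift $m$ in the fiber $\pi_K^{-1}(q)$, which follows from the $K$-invariance of $\mathscr{D}_H$, $\mathscr{D}_K$, and $g$.
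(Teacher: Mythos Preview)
Your proof is correct. The paper does not supply a proof of this proposition at all, so there is nothing to compare against; the argument you give---lift $\bar v\in\mathscr{D}$ to the unique $w\in\mathscr{D}_H$ via Remark~\ref{rem:iso}, use $\pi_H=\pi\circ\pi_K$ to rewrite $d\pi(\bar v)=d\pi_H(w)$, and chain equations~(\ref{eq:MK}) and~(\ref{eq:MH}) through the inclusion $\mathscr{D}_H\subseteq\mathscr{D}_K$---is exactly the natural one.

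Two very minor remarks. First, the hypothesis you single out as crucial is only the pair of equalities $\mathscr{D}_H=\mathscr{V}_H^\bot$ and $\mathscr{D}_K=\mathscr{V}_K^\bot$ for the \emph{same} metric $g$; the further requirement that $\mathbb{I}^H_m$ be independent of $m$ (the ``constant bi-invariant type'' condition) plays no role here. Second, your closing caveat about independence of the lift $m\in\pi_K^{-1}(q)$ is not needed for the argument: once $g_{M/K}$ and $g_{M/H}$ are well-defined (which the paper takes for granted, this being where $H$- and $K$-invariance of $g$ enter), the identity $g_{M/H}(d\pi(\bar v),d\pi(\bar v))=g_{M/K}(\bar v,\bar v)$ you derive involves no choice of $m$, so any lift will do.
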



\begin{remark}
The map $d\pi_K\vert_{{\mathscr{D}}_K}\colon {\mathscr{D}}_K\to T(M/K)$
is an isometry on each fiber with respect to the Riemannian metrics $g_{{\mathscr{D}}_K}$ and $g_{M/K}$.
\end{remark}

Given a smooth subbdundle $D$ of $TM$ and a metric tensor $g_D$ on $M$ defined only for vectors belonging to $D_m$, $m\in M$, the {\it $g_D$ sharp} map $\sharp^{g_D}\colon T^*M\to TM$, is the unique map satisfying ${\rm im}\,\sharp^{g_D}=D$ and if $\lambda\in T^*_mM$, then $\sharp^{g_D}(\lambda)\in D_m$ is the unique vector for which
$
\lambda(w)=g_D(\sharp^{g_D}(\lambda),w)$, for all $w\in D_m$, $m\in M$.
The cometric $g_D^*\colon T^*M\times T^*M\to{\mathbb R}$ {\em associated to} $g_D$ is defined by $g_D^*(\lambda,\mu)=g_D\big(\sharp^{g_D}(\lambda),\sharp^{g_D}(\mu)\big)$. Every cometric 
defines a function ${\mathbf H}\in C^\infty(T^*M)$, called the {\it Hamiltonian} associated to $g^*$, by the formula
\[
{\mathbf H}(m,\lambda)=\frac12g^*(\lambda,\lambda),\quad\lambda\in T^*_mM.
\]

Using the metrics $g$, $g_{{\mathscr{D}}_H}$, $g_{{\mathscr{D}}_K}$, $g_{{\mathscr{V}}_H}=g|_{{\mathscr{V}}_H}$ and $g_{{\mathscr{V}}_K}=g|_{{\mathscr{V}}_K}$ on $M$, we define the respective Hamiltonian functions on $T^*M$. Note that $g=g_{{\mathscr{D}}_H}+g_{{\mathscr{V}}_H}=g_{{\mathscr{D}}_K}+g_{{\mathscr{V}}_K}$. This implies the equalities  ${\mathbf H}_M={\mathbf H}^{{\mathscr{D}}_H}+{\mathbf H}^{{\mathscr{V}}_H}={\mathbf H}^{{\mathscr{D}}_K}+{\mathbf H}^{{\mathscr{V}}_K}$.

Similarly, considering the metrics $g_{M/K}$, $g_{\mathscr{D}}=g_{M/K}|_{\mathscr{D}}$ and $g_{\mathscr{V}}=g_{M/K}|_{\mathscr{V}}$ on $M/K$, one has the respective Hamiltonian functions on $T^*(M/K)$ and a decomposition ${\mathbf H}_{M/K}={\mathbf H}^{\mathscr{D}}+{\mathbf H}^{\mathscr{V}}$.

Let us denote by $\pi^*_K\colon T^*(M/K)\to T^*M$ the induced map of cotangent bundles defined by
$
\pi_K^*(\lambda)(v)=\lambda(d_m\pi_K(v))$, for $\lambda\in T^*(M/K)$, $v\in T_mM$, $m\in M$.

\begin{proposition}\label{prop:Hamcomm}
The following identities take place:
\begin{enumerate}
\item[(a)] ${\mathbf H}^{{\mathscr D}_H}\circ\pi^*_K={\mathbf H}^{\mathscr D}$,
\item[(b)] $\big({\mathbf H}^{{\mathscr D}_K}-{\mathbf H}^{{\mathscr D}_H}\big)\circ\pi^*_K={\mathbf H}^{\mathscr V}$.
\item[(c)] As a consequence, we have that $\big({\mathbf H}_M-{\mathbf H}^{{\mathscr V}_K}\big)\circ\pi^*_K={\mathbf H}^{{\mathscr D}_K}\circ\pi^*_K={\mathbf H}_{M/K}$.
\end{enumerate} 
\end{proposition}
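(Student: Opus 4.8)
The core idea is to understand the pullback map $\pi_K^*$ on cotangent spaces fiberwise and to track how each piece of the orthogonal decomposition behaves under it. Fix $m \in M$ and set $n = \pi_K(m) \in M/K$. The map $d_m\pi_K \colon T_mM \to T_nN$ has kernel ${\mathscr V}_{K,m}$, and by Remark~\ref{rem:iso} it restricts to an isomorphism ${\mathscr D}_{K,m} \xrightarrow{\ \sim\ } T_n(M/K)$, which moreover splits compatibly with the finer decomposition ${\mathscr D}_{K} = {\mathscr D}_{H} \oplus ({\mathscr D}_{K}\cap \text{(vertical-for-}\pi\text{)})$ since $d\pi_K$ sends ${\mathscr D}_H$ to ${\mathscr D}$ and, by Lemma~\ref{lemma:Ehr}, ${\mathscr D}$ is transverse to $\ker d\pi$. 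Dually, $\pi_K^* \colon T_n^*(M/K) \to T_m^*M$ is injective with image the annihilator of ${\mathscr V}_{K,m}$, i.e.\ the subspace of covectors that vanish on ${\mathscr V}_K$. The first thing I would establish carefully is: for $\lambda \in T_n^*(M/K)$, the vector $\sharp^{g}(\pi_K^*\lambda)$ lies in ${\mathscr D}_{K,m}$ and equals $(d_m\pi_K|_{{\mathscr D}_K})^{-1}\big(\sharp^{g_{M/K}}(\lambda)\big)$; this follows because $d\pi_K|_{{\mathscr D}_K}$ is a fiberwise isometry (the Remark preceding the statement) together with the defining property $\pi_K^*\lambda(v) = \lambda(d\pi_K v)$. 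In particular ${\mathbf H}^{{\mathscr D}_K}\circ \pi_K^* = {\mathbf H}_{M/K}$, which is the second equality in~(c).

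For part~(a): given $\lambda \in T_n^*(M/K)$, I want to compare $\sharp^{g_{{\mathscr D}_H}}(\pi_K^*\lambda)$ with $\sharp^{g_{\mathscr D}}(\lambda)$. Using that $\sharp^{g_{{\mathscr D}_H}}(\pi_K^*\lambda)$ is by definition the unique vector in ${\mathscr D}_{H,m}$ with $\langle \pi_K^*\lambda, w\rangle = g(\sharp^{g_{{\mathscr D}_H}}(\pi_K^*\lambda), w)$ for all $w \in {\mathscr D}_{H,m}$, and transporting through the isometry $d\pi_K \colon {\mathscr D}_{H} \to {\mathscr D}$ (which is an isometry because it is the restriction of the isometry on ${\mathscr D}_K$, and $g_{\mathscr D} = g_{M/K}|_{\mathscr D}$), I would show $d_m\pi_K\big(\sharp^{g_{{\mathscr D}_H}}(\pi_K^*\lambda)\big) = \sharp^{g_{\mathscr D}}(\lambda)$. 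Then ${\mathbf H}^{{\mathscr D}_H}(\pi_K^*\lambda) = \tfrac12 g_{{\mathscr D}_H}^*(\pi_K^*\lambda,\pi_K^*\lambda) = \tfrac12 g_{\mathscr D}^*(\lambda,\lambda) = {\mathbf H}^{\mathscr D}(\lambda)$, again invoking the fiberwise isometry so that sharps correspond and lengths are preserved.

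For part~(b): the point is that ${\mathscr D}_{K,m} = {\mathscr D}_{H,m} \oplus {\mathscr E}_m$ where ${\mathscr E}_m := {\mathscr D}_{K,m}\cap ({\mathscr D}_{H,m})^{\perp_g}$, and $d\pi_K$ maps this $g$-orthogonal splitting to the $g_{M/K}$-orthogonal splitting $T_n(M/K) = {\mathscr D}_n \oplus {\mathscr V}_n$; this orthogonality claim is where I expect to spend real effort, and it should come from the Riemannian-submersion structure of $\pi\colon M/K \to M/H$ together with the fact that ${\mathscr D}_H = {\mathscr V}_H^\perp$ and ${\mathscr D}_K = {\mathscr V}_K^\perp$ in $M$, so that ${\mathscr E}$ is exactly the ($g$-orthogonal) vertical-for-$\pi$ complement of ${\mathscr D}_H$ inside ${\mathscr D}_K$ and its $d\pi_K$-image is $\ker d\pi = {\mathscr V}$. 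Granting this, for $\lambda \in T_n^*(M/K)$ the sharp $\sharp^{g_{{\mathscr D}_K}}(\pi_K^*\lambda) = \sharp^{g_{{\mathscr D}_H}}(\pi_K^*\lambda) + \sharp^{g_{\mathscr E}}(\pi_K^*\lambda)$ splits orthogonally, hence $g_{{\mathscr D}_K}^*(\pi_K^*\lambda,\pi_K^*\lambda) = g_{{\mathscr D}_H}^*(\pi_K^*\lambda,\pi_K^*\lambda) + g_{\mathscr E}^*(\pi_K^*\lambda,\pi_K^*\lambda)$, and transporting the last summand through the fiberwise isometry identifies it with $g_{\mathscr V}^*(\lambda,\lambda)$, giving $({\mathbf H}^{{\mathscr D}_K} - {\mathbf H}^{{\mathscr D}_H})\circ\pi_K^* = {\mathbf H}^{\mathscr V}$. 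Finally~(c) is assembled formally: from ${\mathbf H}_M = {\mathbf H}^{{\mathscr D}_K} + {\mathbf H}^{{\mathscr V}_K}$ we get $({\mathbf H}_M - {\mathbf H}^{{\mathscr V}_K})\circ\pi_K^* = {\mathbf H}^{{\mathscr D}_K}\circ\pi_K^*$, and then $({\mathbf H}^{{\mathscr D}_H} + ({\mathbf H}^{{\mathscr D}_K}-{\mathbf H}^{{\mathscr D}_H}))\circ\pi_K^* = {\mathbf H}^{\mathscr D} + {\mathbf H}^{\mathscr V} = {\mathbf H}_{M/K}$, using~(a) and~(b). The main obstacle is the orthogonality bookkeeping in~(b): making precise that the $g$-orthogonal complement of ${\mathscr D}_H$ inside ${\mathscr D}_K$ is carried by $d\pi_K$ onto the $g_{M/K}$-orthogonal complement of ${\mathscr D}$ inside $T(M/K)$, i.e.\ onto $\ker d\pi$.
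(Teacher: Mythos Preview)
Your proof is correct and is essentially the paper's argument phrased invariantly rather than through a local orthonormal frame adapted to the nested splitting: both hinge on the identity $d\pi_K\circ\sharp^{g}\circ\pi_K^*=\sharp^{g_{M/K}}$ together with the fact that $d\pi_K|_{{\mathscr D}_K}$ is a fiberwise isometry. The ``main obstacle'' you flag in~(b) is not one: an isometry carries orthogonal complements to orthogonal complements, so from $d\pi_K({\mathscr D}_H)={\mathscr D}$ you get $d\pi_K({\mathscr E})={\mathscr D}^{\perp_{g_{M/K}}}={\mathscr V}$ immediately, with no need to identify ${\mathscr V}$ with $\ker d\pi$ or to invoke the Riemannian-submersion property of $\pi$.
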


\begin{proof}
Let $n=\dim M$, $r={\rm rk}\,{\mathscr{D}}_H$ and $s={\rm rk}\,{\mathscr{D}}_K$. 
Consider $X_1,\ldots,X_r,\ldots,X_s$, $V_1,\ldots,V_{n-s}$ a local frame of vector fields of ${\mathfrak{X}}(M)$ orthonormal with respect to $g$, where the vector fields $X_1,\ldots,X_r$ span the local sections of ${\mathscr{D}}_H$, the vector fields $X_1,\ldots,X_s$ span the local sections of ${\mathscr{D}}_K$ and the vector fields $V_1,\ldots,V_{n-s}$ span the local sections of ${\mathscr{V}}_K$. Denote by $Y_j=d\pi_K(X_j)$, for $j=1,\ldots,s$. It follows from this choice that $Y_1,\ldots,Y_s$ are orthonormal with respect to $g_{M/K}$ and that $Y_1,\ldots,Y_r$ span ${\mathscr{D}}$ at each $p\in M/K$.

Given the Riemannian metric $g$, we have a canonical isomorphism between $TM$ and $T^*M$, thus we have the dual frames $p_{X_1},\ldots,p_{X_s},p_{V_1},\ldots,p_{V_{n-s}}$ defined on $M$ and $p_{Y_1},\ldots,p_{Y_s}$ defined on $M/K$. With all of these notations, we have the Riemannian Hamiltonians
\[
{\mathbf H}_M(\lambda)=\frac12\sum_{i=1}^sg^*(\lambda,p_{X_i})^2+\frac12\sum_{j=1}^{n-s}g^*(\lambda,p_{V_j})^2,\;
{\mathbf H}_{M/K}(\mu)=\frac12\sum_{i=1}^sg_{M/K}^*(\mu,p_{Y_i})^2,
\]
where $\lambda\in T^*M$ and $\mu\in T^*(M/K)$, the horizontal Hamiltonians
\[
{\mathbf H}^{{\mathscr D}_H}(\lambda)=\frac12\sum_{i=1}^rg^*(\lambda,p_{X_i})^2,\quad
{\mathbf H}^{{\mathscr D}_K}(\lambda)=\frac12\sum_{i=1}^sg^*(\lambda,p_{X_i})^2,\;
\]
\[
{\mathbf H}^{\mathscr D}(\mu)=\frac12\sum_{i=1}^rg_{M/K}^*(\mu,p_{Y_i})^2,
\]
and the vertical Hamiltonians
\[
{\mathbf H}^{{\mathscr V}_K}(\lambda)=\frac12\sum_{j=1}^{n-s}g^*(\lambda,p_{V_j})^2,\quad\quad
{\mathbf H}^{\mathscr V}(\mu)=\frac12\sum_{j=r+1}^{s}g_{M/K}^*(\mu,p_{Y_j})^2.
\]

Before we start computing, it is convenient to note that the diagram
\begin{equation}\label{diag:sharppi}
\xymatrix{T^*(M/K)\ar@{->}[rr]^{\pi_K^*}\ar@{->}[d]_{\sharp^{g_{M/K}}}&&T^*M\ar@{->}[d]^{\sharp^g}\\T(M/K)\ar@{<-}[rr]^{d\pi_K}&&TM}
\end{equation}
commutes, that is, we have the equality $d\pi_K\circ\sharp^g\circ\pi_K^*=\sharp^{g_{M/K}}$. To see this,  observe that $d\pi_K(X_j)=Y_j$ implies that $\pi_K^*(p_{Y_j})=p_{X_j}$. Indeed
\[
\pi_K^*(p_{Y_j})(X_i)=p_{Y_j}(d\pi_K(X_i))=p_{Y_j}(Y_i)=\delta_{ji},
\]
where $\delta_{ji}$ is the Kronecker delta. Also note that $\sharp^gp_{X_i}=X_i$, which implies that $\sharp^g\big(\pi_K^*(p_{Y_j})\big)=X_j$. Finally, we can conclude that
$
d\pi_K\Big(\sharp^g\big(\pi_K^*(p_{Y_j})\big)\Big)=d\pi_K(X_j)=Y_j=\sharp^{g_{M/K}}p_{Y_j}
$.
By linearity, the commutativity of the diagram follows.

To prove {\it (a)}, let $\mu\in T^*(M/K)$, then we compute
\begin{eqnarray*}
\big({\mathbf H}^{{\mathscr D}_H}\circ\pi^*_K\big)(\mu)&=&\frac12\sum_{i=1}^rg^*(\pi_K^*(\mu),p_{X_i})^2=\frac12\sum_{i=1}^rg(\sharp^g\pi_K^*(\mu),\sharp^gp_{X_i})^2\\
&=&\frac12\sum_{i=1}^rg(\sharp^g\pi_K^*(\mu),X_i)^2=\frac12\sum_{i=1}^rg_{M/K}(d\pi_K\sharp^g\pi_K^*(\mu),d\pi_KX_i)^2\\
&=&\frac12\sum_{i=1}^rg_{M/K}(\sharp^{g_{M/K}}\mu,Y_i)^2=\frac12\sum_{i=1}^rg_{M/K}^*(\mu,p_{Y_i})^2
={\mathbf H}^{{{\mathscr D}}}(\mu)
\end{eqnarray*}
In the fourth equality we used the fact that $\pi_K$ is a Riemannian submersion, and in the fifth one, the commutativity of diagram (\ref{diag:sharppi}).

A similar computation can be performed for {\it (b)}. Equality {\it (c)} can be obtained adding {\it (a)} and {\it (b)}.\qed
\end{proof}

\begin{remark}
Proposition~\ref{prop:Hamcomm} is a special case of the so-called lifted Hamiltonian in \cite{Grong}. In our case, the map $\pi^2$ defined in \cite{Grong} corresponds to $(\pi_K^*|_{{\rm im}\,\pi_K^*})^{-1}$ and extended by zero to the orthogonal complement of ${\rm im}\,\pi_K^*$. We observe that since $TM={\mathscr{D}}_K\oplus_\bot{\mathscr{V}}_K$, then the metric produces the two isomorphisms ${\mathscr{D}}_K^*\cong{\rm Ann}({\mathscr{V}}_K)$ and ${\mathscr{V}}_K^*\cong{\rm Ann}({\mathscr{D}}_K)$. {Here ${\rm Ann}(E)\subset T^*M$ is the annihilator of the vector subbundle $E\subset TM$.}
\end{remark}


\section{Sub-Riemannian geodesics}\label{sec:geod}

\begin{definition}\label{def:sR}
A sub-Riemannian manifold is a triplet $(M,D,g_D)$, where $D\hookrightarrow TM$ is a smooth (integrable/non-integrable) vector subbundle of $TM$ and $g_D$ is a metric tensor on $M$ defined only for vectors belonging to $D_p$ for all $p\in M$. 
\end{definition}
%
Given a Riemannian metric $g=g_{TM}$ on $M$, we denote by ${\mathbf{H}}_M$ the Hamiltonian associated to $g^*$. Given a sub-Riemannian metric $g_D$ on $M$, we denote by ${\mathbf{H}}^D$ the Hamiltonian associated to $g_D^*$.

\begin{definition}
Let $(M,D,g_D)$ be a sub-Riemannian manifold. The image of the projection $\Pi_M\colon T^*M\to M$ of the flow $e^{t\overrightarrow{\mathbf{H}}^D}$ of the Hamiltonian vector field $\overrightarrow{\mathbf{H}}^D$ associated to ${\mathbf{H}}^D$ is called a  sub-Riemannian geodesic.
\end{definition}

%
%

The aim of this section is to relate the sub-Riemannian geodesics in the sub-Riemannian manifolds $(M,{\mathscr D}_H,g_{{\mathscr D}_H})$ and $(M/K,{\mathscr D},g_{{\mathscr D}})$. Notice that we have the following commutative diagram of cotangent bundles
\[
\xymatrix{&T^*M\ar@{->}[dd]^{\Pi_M}&T^*(M/K)\ar@{->}[l]_{\pi_K^*}\ar@{->}[dd]^{\Pi_{M/K}}\\T^*(M/H)\ar@{->}[ur]^{\pi_H^*}\ar@{->}[urr]_{\!\!\!\!\!\!\!\!\pi^*}\ar@{->}[dd]_{\Pi_{M/H}}&&\\&M\ar@{->}[r]^{\pi_K}\ar@{->}[dl]_{\pi_H}&M/K\ar@{->}[dll]^{\pi}\\M/H&&}
\]

Let $(m,\lambda)\in T^*M$, then the sub-Riemannian geodesic starting at $m\in M$ with covector $\lambda\in T^*_mM$ and tangent to ${\mathscr D}_H$ is given by
\[
\gamma_{{\mathscr D}_H}^{sR}(t;m,\lambda)=\big(\Pi_M\circ e^{t{\overrightarrow{\mathbf{H}}^{{\mathscr D}_H}}}\big)(m,\lambda),
\]
for $t>0$ sufficiently small. An analogous definition is valid for  sub-Riemannian geodesics $\gamma_{{\mathscr D}}^{sR}(t;n,\mu)$, $(n,\mu)\in T^*(M/K)$.  Let $\omega_M$ and $\omega_{M/K}$ be the canonical symplectic forms on $T^*M$ and $T^*(M/K)$, respectively.



\begin{theorem}\label{th:geod}
Let $(n,\mu)\in T^*(M/K)$, and consider $m\in M$ with $n=\pi_K(m)$ and $\pi_K^*\mu=\lambda\in T^*_mM$. If the map $\pi_K^*\colon T^*(M/K)\to T^*M$ is a symplectomorphism, then
\begin{equation}\label{eq:geod}
\pi_K\big(\gamma_{{\mathscr D}_H}^{sR}(t;m,\lambda)\big)=\gamma_{{\mathscr D}}^{sR}(t;n,\mu).
\end{equation}
\end{theorem}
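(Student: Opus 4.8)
The plan is to show that the Hamiltonian flows of $\overrightarrow{\mathbf H}^{{\mathscr D}_H}$ on $T^*M$ and $\overrightarrow{\mathbf H}^{\mathscr D}$ on $T^*(M/K)$ are intertwined by $\pi_K^*$, and then project down to $M$ via $\Pi_M$. The key input from the previous section is Proposition~\ref{prop:Hamcomm}(a), which gives ${\mathbf H}^{{\mathscr D}_H}\circ\pi_K^*={\mathbf H}^{\mathscr D}$, i.e. $\pi_K^*$ pulls back the Hamiltonian function on $T^*M$ to the one on $T^*(M/K)$. Combined with the hypothesis that $\pi_K^*$ is a symplectomorphism, this is exactly the setup in which Hamiltonian vector fields are $\pi_K^*$-related.

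First I would recall the general fact: if $\Phi\colon(P_1,\omega_1)\to(P_2,\omega_2)$ is a symplectomorphism and $f\in C^\infty(P_2)$, then $\Phi$ carries the Hamiltonian vector field of $f\circ\Phi$ to the Hamiltonian vector field of $f$, that is $\Phi_*\overrightarrow{f\circ\Phi}=\overrightarrow{f}$. This follows from $\iota_{\overrightarrow{f\circ\Phi}}\omega_1=d(f\circ\Phi)=\Phi^*(df)=\Phi^*(\iota_{\overrightarrow f}\omega_2)=\iota_{\Phi^{-1}_*\overrightarrow f}\,\Phi^*\omega_2=\iota_{\Phi^{-1}_*\overrightarrow f}\,\omega_1$ and nondegeneracy of $\omega_1$. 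Applying this with $\Phi=\pi_K^*$, $\omega_1=\omega_{M/K}$, $\omega_2=\omega_M$ and $f={\mathbf H}^{{\mathscr D}_H}$, using Proposition~\ref{prop:Hamcomm}(a) to identify $f\circ\Phi={\mathbf H}^{\mathscr D}$, we obtain $(\pi_K^*)_*\overrightarrow{\mathbf H}^{\mathscr D}=\overrightarrow{\mathbf H}^{{\mathscr D}_H}$. Being $\pi_K^*$-related, the flows satisfy $\pi_K^*\circ e^{t\overrightarrow{\mathbf H}^{\mathscr D}}=e^{t\overrightarrow{\mathbf H}^{{\mathscr D}_H}}\circ\pi_K^*$ for $t$ small.

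Next I would evaluate at $(n,\mu)$. By hypothesis $\pi_K^*(n,\mu)=(m,\lambda)$, so
\[
e^{t\overrightarrow{\mathbf H}^{{\mathscr D}_H}}(m,\lambda)=e^{t\overrightarrow{\mathbf H}^{{\mathscr D}_H}}\big(\pi_K^*(n,\mu)\big)=\pi_K^*\big(e^{t\overrightarrow{\mathbf H}^{\mathscr D}}(n,\mu)\big).
\]
Applying $\Pi_M$ to both sides and using the commutativity $\Pi_M\circ\pi_K^*=\pi_K\circ\Pi_{M/K}$ (which is immediate from the definition of $\pi_K^*$, since $\pi_K^*$ covers $\pi_K$ on base points), we get
\[
\Pi_M\big(e^{t\overrightarrow{\mathbf H}^{{\mathscr D}_H}}(m,\lambda)\big)=\pi_K\Big(\Pi_{M/K}\big(e^{t\overrightarrow{\mathbf H}^{\mathscr D}}(n,\mu)\big)\Big).
\]
The left-hand side is precisely $\gamma_{{\mathscr D}_H}^{sR}(t;m,\lambda)$ and the right-hand side is $\pi_K\big(\gamma_{\mathscr D}^{sR}(t;n,\mu)\big)$ — wait, one must read the equation the correct way: it says $\pi_K\big(\gamma_{{\mathscr D}_H}^{sR}(t;m,\lambda)\big)=\gamma_{\mathscr D}^{sR}(t;n,\mu)$ only after noting that $\gamma_{{\mathscr D}_H}^{sR}$ lives in $M$ and its $\pi_K$-image is what the flow downstairs traces; re-deriving, since $\Pi_M\circ\pi_K^*=\pi_K\circ\Pi_{M/K}$ we actually get $\pi_K\big(\gamma_{{\mathscr D}_H}^{sR}(t;m,\lambda)\big)=\pi_K\big(\Pi_M(e^{t\overrightarrow{\mathbf H}^{{\mathscr D}_H}}(m,\lambda))\big)$, and substituting $e^{t\overrightarrow{\mathbf H}^{{\mathscr D}_H}}(m,\lambda)=\pi_K^*(e^{t\overrightarrow{\mathbf H}^{\mathscr D}}(n,\mu))$ yields $\pi_K\big(\Pi_M(\pi_K^*(e^{t\overrightarrow{\mathbf H}^{\mathscr D}}(n,\mu)))\big)=\pi_K\big(\pi_K(\Pi_{M/K}(e^{t\overrightarrow{\mathbf H}^{\mathscr D}}(n,\mu)))\big)$, which is not literally what we want. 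The cleaner route, which I would actually write, is: $\gamma_{{\mathscr D}_H}^{sR}(t;m,\lambda)=\Pi_M(\pi_K^*(e^{t\overrightarrow{\mathbf H}^{\mathscr D}}(n,\mu)))=\pi_K(\Pi_{M/K}(e^{t\overrightarrow{\mathbf H}^{\mathscr D}}(n,\mu)))=\pi_K(\gamma_{\mathscr D}^{sR}(t;n,\mu))$, so in fact $\gamma_{{\mathscr D}_H}^{sR}$ already has image in a single $\pi_K$-fiber structure — more precisely, applying $\pi_K$ is redundant and the displayed identity~\eqref{eq:geod} should be read as: the $\pi_K$-projection of the upstairs geodesic equals the downstairs one, which is exactly $\pi_K(\gamma_{{\mathscr D}_H}^{sR}(t;m,\lambda))=\gamma_{\mathscr D}^{sR}(t;n,\mu)$ once one observes $\Pi_M\circ\pi_K^*=\pi_K\circ\Pi_{M/K}$.

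The main obstacle, and the only place the symplectomorphism hypothesis is genuinely used, is the first step: passing from equality of Hamiltonian functions to $\pi_K^*$-relatedness of Hamiltonian vector fields. In general $\pi_K^*$ is only an immersion onto its image ${\rm Ann}({\mathscr V}_K)\subset T^*M$, so $\overrightarrow{\mathbf H}^{{\mathscr D}_H}$ need not be tangent to that image nor need $\pi_K^*$ push forward vector fields at all; the hypothesis that $\pi_K^*$ is a symplectomorphism (in particular a diffeomorphism onto $T^*M$) is precisely what makes the push-forward well-defined and the intertwining of flows valid. Everything else is a diagram chase with the definitions of $\pi_K^*$ and $\Pi_M$.
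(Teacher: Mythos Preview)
Your strategy is exactly the paper's: use Proposition~\ref{prop:Hamcomm}(a) together with the symplectomorphism hypothesis to conclude that $\overrightarrow{\mathbf H}^{{\mathscr D}_H}$ and $\overrightarrow{\mathbf H}^{\mathscr D}$ are $\pi_K^*$-related, intertwine the flows, and then project via the footpoint maps. Your packaging of the first step as the general fact ``$\Phi_*\overrightarrow{f\circ\Phi}=\overrightarrow f$ for a symplectomorphism $\Phi$'' is a clean way to say precisely what the paper spells out with $\omega_{M/K}$ and $\omega_M$.

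The one genuine slip is in the final diagram chase. You repeatedly invoke the identity $\Pi_M\circ\pi_K^*=\pi_K\circ\Pi_{M/K}$, but that equation does not type-check: the left side lands in $M$ and the right side in $M/K$. What actually holds (and what ``$\pi_K^*$ covers $\pi_K$ on base points'' really says) is
\[
\pi_K\circ\Pi_M\circ\pi_K^*=\Pi_{M/K}.
\]
This is why your ``cleaner route'' produces the strange conclusion that $\gamma_{{\mathscr D}_H}^{sR}$ already lies in $M/K$ and that applying $\pi_K$ is ``redundant''. With the correct identity the computation is immediate and matches the paper's:
\[
\pi_K\big(\gamma_{{\mathscr D}_H}^{sR}(t;m,\lambda)\big)
=\big(\pi_K\circ\Pi_M\circ\pi_K^*\big)\big(e^{t\overrightarrow{\mathbf H}^{\mathscr D}}(n,\mu)\big)
=\Pi_{M/K}\big(e^{t\overrightarrow{\mathbf H}^{\mathscr D}}(n,\mu)\big)
=\gamma_{\mathscr D}^{sR}(t;n,\mu).
\]
Once you correct that one line, your argument is complete and coincides with the paper's proof.
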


\begin{proof}
For any $w\in TT^*(M/K)$, we have
\begin{eqnarray*}
\omega_{M/K}(\overrightarrow{\mathbf{H}}^{\mathscr D},w)&=&d{\mathbf{H}}^{\mathscr{D}}(w)=d\big({\mathbf{H}}^{{\mathscr{D}}_H}\circ\pi_K^*\big)(w)=d{\mathbf{H}}^{{\mathscr{D}}_H}\big(d\pi_K^*(w)\big)\\
&=&\omega_M\big(\overrightarrow{\mathbf{H}}^{{\mathscr D}_H},d\pi_K^*(w)\big),
\end{eqnarray*}
from the definition of the Hamiltonian vector fields, the chain rule and Proposition~\ref{prop:Hamcomm}. The map $\pi_K^*\colon T^*(M/K)\to T^*M$ is a symplectomorphism, that is
$
\omega_{M/K}(\alpha,\beta)=\omega_M\big(d\pi_K^*(\alpha),d\pi_K^*(\beta)\big)$,
for all $\alpha,\beta\in TT^*M/K$. Then for any $w\in TT^*(M/K)$ we also have
$
\omega_{M/K}(\vec{\mathbf{H}}^{\mathscr{D}},w)=\omega_M\big(d\pi_K^*(\vec{\mathbf{H}}^{\mathscr{D}}),d\pi_K^*(w)\big)$.
Since the symplectic form $\omega_M$ is non-degenerate, we deduce that $\overrightarrow{\mathbf{H}}^{\mathscr D_H}=d\pi_K^*(\overrightarrow{\mathbf{H}}^{\mathscr D})$ from
$$
\omega_M\big(d\pi_K^*(\overrightarrow{\mathbf{H}}^{\mathscr D}),d\pi_K^*(w)\big)=\omega_M\big(\overrightarrow{\mathbf{H}}^{\mathscr D_H},d\pi_K^*(w)\big).
$$ 
This implies that the flows of the Hamiltonian vector fields are related by
\[
e^{t\overrightarrow{\mathbf{H}}^{\mathscr D_H}}(m,\lambda)=e^{td\pi_K^*\big(\overrightarrow{\mathbf{H}}^{\mathscr D}\big)}(m,\lambda)=\pi_K^*\circ e^{t\overrightarrow{\mathbf{H}}^{\mathscr D}}(n,\mu).
\]

To complete the proof, observe that the natural diagram
\begin{equation}\label{diag:sharppi}
\xymatrix{T^*M\ar@{<-}[rr]^{\pi_K^*}\ar@{->}[d]_{\Pi_M}&&T^*(M/K)\ar@{->}[d]^{\Pi_{M/K}}\\M\ar@{->}[rr]^{\pi_K}&&M/K}
\end{equation}
commutes, therefore
\begin{eqnarray*}
\pi_K\big(\gamma_{{\mathscr D}_H}^{sR}(t;m,\lambda)\big)&=&\big(\pi_K\circ\Pi_M\circ e^{t\overrightarrow{\mathbf{H}}^{\mathscr D_H}}\big)(t;m,\lambda)=\big(\pi_K\circ\Pi_M\circ\pi_K^*\circ e^{t\overrightarrow{\mathbf{H}}^{\mathscr D}}\big)(t;n,\mu)\\
&=&\big(\Pi_{M/K}\circ e^{t\overrightarrow{\mathbf{H}}^{\mathscr D}}\big)(t;n,\mu)=\gamma_{{\mathscr D}}^{sR}(t;n,\mu),
\end{eqnarray*}
which is the equality sought after.\qed
\end{proof}


%

\begin{corollary}
The sub-Riemannian geodesics $\gamma_{{\mathscr D}_H}^{sR}(t;m,\lambda)$ and $\gamma_{{\mathscr D}}^{sR}(t;n,\mu)$ in Theorem \ref{th:geod} have the same projection to $M/H$, that is
\[
\pi_H\big(\gamma_{{\mathscr D}_H}^{sR}(t;m,\lambda)\big)=\pi\big(\gamma_{{\mathscr D}}^{sR}(t;n,\mu)\big).
\]
\end{corollary}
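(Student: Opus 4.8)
The plan is to reduce this corollary directly to Theorem~\ref{th:geod} together with the commutativity of the triangles in diagram~(\ref{eq:maindiag}). Since the statement only asks to compare the images in $M/H$ of the two sub-Riemannian geodesics, and the first of these lives in $M$ while the second lives in $M/K$, the natural strategy is to push both curves down to $M/H$ using the fibration $\pi\colon M/K\to M/H$ and the submersion $\pi_H\colon M\to M/H$, and to exploit the identity $\pi\circ\pi_K=\pi_H$.

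First I would recall that Theorem~\ref{th:geod} gives, under the hypothesis that $\pi_K^*$ is a symplectomorphism, the equality
\[
\pi_K\big(\gamma_{{\mathscr D}_H}^{sR}(t;m,\lambda)\big)=\gamma_{{\mathscr D}}^{sR}(t;n,\mu).
\]
Applying the map $\pi\colon M/K\to M/H$ to both sides yields
\[
\pi\Big(\pi_K\big(\gamma_{{\mathscr D}_H}^{sR}(t;m,\lambda)\big)\Big)=\pi\big(\gamma_{{\mathscr D}}^{sR}(t;n,\mu)\big).
\]
Next I would use the commutativity $\pi\circ\pi_K=\pi_H$, which holds because the lower triangle of diagram~(\ref{eq:maindiag}) commutes; this rewrites the left-hand side as $\pi_H\big(\gamma_{{\mathscr D}_H}^{sR}(t;m,\lambda)\big)$, giving exactly the claimed identity. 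Implicitly one should note that the hypotheses of Theorem~\ref{th:geod} are in force: the pair $(m,\lambda)$ is obtained from $(n,\mu)$ precisely as required there, namely $n=\pi_K(m)$ and $\lambda=\pi_K^*\mu$, and the symplectomorphism assumption on $\pi_K^*$ is inherited from the theorem's statement.

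There is essentially no obstacle here: the corollary is a one-line consequence of the theorem and the functoriality of the construction, and the only care needed is bookkeeping to make sure the same choice of lift $(m,\lambda)$ is used and that the hypothesis on $\pi_K^*$ is carried along. If one wished to emphasize the geometric content, one could also observe that both curves, being horizontal for their respective Ehresmann connections ${\mathscr D}_H$ and ${\mathscr D}$, project to a common curve downstairs which is itself a sub-Riemannian geodesic of $(M/H,{\mathscr D}_{M/H},g_{{\mathscr D}_{M/H}})$ in the analogous sense; but this is not needed for the statement as phrased, so I would keep the argument to the short chain of equalities above.
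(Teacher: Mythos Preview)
Your argument is correct and is exactly the intended one: the paper states the corollary without proof, treating it as an immediate consequence of equation~(\ref{eq:geod}) together with the commutativity $\pi\circ\pi_K=\pi_H$ from diagram~(\ref{eq:maindiag}), which is precisely the chain of equalities you wrote down.
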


%
%

\section{Examples}\label{sec:ex}

%
%
\subsection{The quaternionic Hopf fibration from $\Sp(2)$}\label{ssec:quatH}

An important special case of the case in which $K\triangleleft H$ is a normal subgroup is the quaternionic Hopf fibration, as constructed in~\cite{BFI}. A comprehensive introduction to Hopf fibrations, one can find in~\cite{GWZ}. 
Recall that the 10-dimensional compact symplectic group ${\Sp}(2)={\U}(4)\,\cap\,{\Sp}(4,\mathbb C)$ can be defined through quaternionic matrices as follows
\[
{\Sp}(2)=\left\{Q=\left(\begin{array}{cc}a&b\\c&d\end{array}\right)\in M(2\times 2,{\mathbb H})\colon Q^*Q=QQ^*={\rm id}\right\},
\]
where $Q^*$ denotes the transpose (quaternion) conjugate of $Q$. 
%
Let us consider the 6-dimensional subgroup
$
H=\left\{\left(\begin{array}{cc}
\mu & 0  \\
0 & \nu 
\end{array}\right)\in{\Sp}(2)\right\}\simeq{\Sp}(1)\times{\Sp}(1)
$
of diagonal quaternionic matrices in ${\Sp}(2)$. The right multiplication
\begin{equation}\label{eq:rightS7}
\left(\left(\begin{array}{cc}
\mu & 0  \\
0 & \nu 
\end{array}\right),Q\right)\mapsto Q \left( \begin{array}{cc}
\bar{\mu} & 0  \\
0 & \bar {\nu} 
\end{array} \right),
\end{equation}
defines a left group action of $H$ on $\Sp(2)$. 
The homogeneous space ${\Sp}(2)/H$ of this action is diffeomorphic to the usual 4-dimensional sphere $S^4$ by means of the ``stereographic projection'' 
\[
{\Sp}(2)/H\stackrel{\pi_H}{\longrightarrow}S^4,\quad H\left( \begin{array}{cc}
a & b  \\
c & d 
\end{array} \right)\mapsto (2 d\bar{b},|b|^2-|d|^2)=(-2c\bar{a},|c|^2-|a|^2),
\]
where $S^4=\{(q,x)\in \mathbb H\times \mathbb R\colon |q|^2+x^2=1\}$.

Let $K$ be the 3-dimensional subgroup of $H$ such that $\nu=1$. Restricting the left action (\ref{eq:rightS7}) to the subgroup $K$ determines the homogeneous space ${\Sp}(2)/K$ which is diffeomorphic to the usual 7-dimensional sphere $S^7$ by means of the projection map
$
K\left( \begin{array}{cc}
a&b\\ c&d 
\end{array} \right) \stackrel{\pi_K}{\longrightarrow} (b,d)
$,
where $S^7=\{(b,d)\in \mathbb H\times \mathbb H\colon |b|^2+|d|^2=1\}$. In this way, we have two maps as in the diagram
\[
\begin{tikzcd}
& {\Sp}(2) \arrow[dl, "\pi_K" description] \arrow[dr, ,  "\pi_H" description] & \\
   S^7 &    & S^4
 \end{tikzcd}
\]

As a direct consequence of this definition, we see that the quaternionic projective line $\mathbb H\mathbb P^1$ is diffeomorphic to the sphere $S^4$ under the map 
\[
[b:d]\mapsto (2 d\bar{b},|b|^2-|d|^2).
\]
Obviously this diffeomorphism is invariant under right multiplication by an element in ${\Sp}(1)$. Using these identifications, the projectivization map
\[
\mathbb H^2-\{(0,0)\}\to\mathbb H\mathbb P^1,\quad\quad (b,d)\mapsto[b:d]
\]
induces a map $h\colon S^7\to S^4$ called the quaternionic Hopf map. Since ${\Sp}(1)$ is diffeomorphic to the 3-dimensional sphere $S^3$, we have the diagram
\[
\begin{tikzcd}
&  & {\Sp}(2) \arrow[dl, "\pi_K" description] \arrow[dr, ,  "\pi_H" description] & \\
S^3 \arrow[r, bend left=20 ] &   S^7 \arrow[rr, "h"]   &    & S^4.
 \end{tikzcd}
\]
Observe that the quaternionic Hopf map provides a principal bundle with a typical fiber $S^3$, called the quaternionic Hopf fibration. 

The map $h$ corresponds to the submersion $\pi\colon{\Sp}(2)/K\to{\Sp}(2)/H$. It is known \cite{BFI,GM} that the distribution ${\mathscr{D}}=d\pi_K({\mathscr{D}}_H)$ on $S^7$ is bracket generating of step 2.
We endow ${\Sp}(2)$ with a bi-invariant Riemannian metric $g$ defined by
\[
g( u,v)={\rm{Re}}\,{\rm tr}(u\cdot v^*),\quad u,v\in{\mathfrak{sp}}(2)\subset M(2,{\mathbb H}),
\]
where $v^*$ denotes the transposed conjugate of $v$. The Ehresmann connections ${\mathscr{D}}_H$ and ${\mathscr{D}}_K$ are chosen as the left-translations of the orthogonal complements to ${\mathfrak h}\cong{\mathfrak{sp}}(1)\times{\mathfrak{sp}}(1)$ and ${\mathfrak k}\cong{\mathfrak{sp}}(1)\times\{0\}$ in ${\mathfrak{sp}}(2)$, with respect to $g$.

\subsection{Gromoll-Meyer exotic sphere}

The Gromoll-Meyer sphere \cite{Gromoll} is constructed in a similar fashion as the quaternionic Hopf fibration, but does not fit in the scheme of Subsection~\ref{ssec:quatH}, in the relation of sub-Riemannian geodesics, see~\cite{BFI}. 

Consider $M={\rm Sp}(2)$, and the subgroup of $M$:
$H={\rm Sp}(1)\times{\rm Sp}(1)$,
acting on the right by
\begin{equation}\label{eq:actionGM}
\left(\begin{array}{cc}x&y\\z&w\end{array}\right).(\lambda,\mu)=
\left(\begin{array}{cc}\bar\lambda x\mu&\bar\lambda y\\\bar\lambda z\mu&\bar\lambda w\end{array}\right)=
\left(\begin{array}{cc}\bar\lambda&0\\0&\bar\lambda\end{array}\right)\left(\begin{array}{cc}x&y\\z&w\end{array}\right)\left(\begin{array}{cc}\mu&0\\0&1\end{array}\right),
\end{equation}
where $(\lambda,\mu)\in H$ and $\left(\begin{array}{cc}x&y\\z&w\end{array}\right)\in M$. Consider the restriction of the action~(\ref{eq:actionGM}) to the subgroup $\Delta=\{(\lambda,\lambda)\in{\rm Sp}(1)\times{\rm Sp}(1)\}< H$, which is not normal in $H$.

As before, the maps $\pi_H$ and $\pi_\Delta$ are the quotient maps with respect to the action of $H$ and $\Delta$ respectively. In a similar way as before, it can be shown that the homogeneous space $M/H$ is diffeomorphic to the sphere $S^4$ with respect to the action~(\ref{eq:actionGM}). On the other hand, the homogeneous space $\Sigma_{GM}:=M/\Delta$, called the Gromoll-Meyer exotic sphere, is a seven dimensional manifold homeomorphic, but not diffeomorphic, to the sphere $S^7$, see \cite{Milnor56}. The corresponding submersion $\pi\colon\Sigma_{GM}\to S^4$ is an $S^3$-bundle over $S^4$ which is not a principal bundle. The distribution ${\mathscr{D}}=d\pi_\Delta({\mathscr{D}}_H)$ on $\Sigma_{GM}$ has been recently shown to be bracket generating of step 2, see~\cite{BFI}.

Endowing $M$ with the Riemannian metric $g$ from Subsection~\ref{ssec:quatH}, we define the Ehresmann connections ${\mathscr{D}}_H$ and ${\mathscr{D}}_\Delta$ as the orthogonal complements to ${\mathscr{V}}_H=\ker d\pi_H$ and ${\mathscr{V}}_K=\ker d\pi_\Delta$, with respect to $g$. In this case, the bilinear form ${\mathbb{I}}^H_m$ does depend on $m\in M$, therefore it is necessary to consider more general formulas for sub-Riemannian geodesics, see \cite{GG}.

\subsection{Twistor space of $S^4$}

Let $N$ be a four dimensional Riemannian manifold. The twistor space ${\mathbb T}(N)$ of $N$ is the fiber bundle of almost complex structures on $N$ that are compatible with the Riemannian metric. In the case of $N=S^4$ this yields to a well known construction where ${\mathbb T}(N)={\mathbb C}P^3$ and the bundle map is given by
\[
{\mathbb C}P^3\ni e\stackrel{T}{\longmapsto} e{\mathbb H}=e\oplus ej\in{\mathbb H}P^1\cong S^4,
\]
where $e\in{\mathbb C}P^3$ is thought of as a line in ${\mathbb C}^4\cong{\mathbb H}^2$. The fibers correspond to spheres ${\mathbb C}P^1\cong S^2$ endowed with its unique complex structure. The map $T$ is sometimes referred as the twistor projection. For more details, see~\cite{AHS,H}.

Consider the inclusion $S^1\hookrightarrow S^3\subset{\mathbb C}^2$ given by $e^{i\theta}\mapsto(e^{i\theta},0)$. The twistor projection $T$ fits in the following diagram
\[
\xymatrix{&S^3\ar@/^/[d]\ar@{->}[rr]^{H_1}&&{\mathbb C}P^1\ar@{->}[d]\\S^1\ar@{^{(}->}[ur]\ar@/^/[r]&S^7\ar@{->}[rr]^{H_3}\ar@{->}[d]_{h}&&{\mathbb C}P^3\ar@{->}[dll]^{T}\\&S^4&&}
\]
where $H_1\colon S^3\to{\mathbb C}P^1$ and $H_3\colon S^7\to{\mathbb C}P^3$ are the classical Hopf fibrations and $h\colon S^7\to S^4$ is the quaternionic Hopf fibration from Subsection~\ref{ssec:quatH}.


\section{Conclusions and future work}


In the paper, an interplay between two principal bundles is studied. It leads to a fiber bundle, called {\it nested bundle}, that is not principal in general. We described the relation between natural distributions, Hamiltonians, and geodesics on all three involved fiber bundles. The motivation for the study were some examples from geometry and physics. 

Similar systems can appear in the rolling problems, where, in a local chart, we can consider a subgroup $K$ of the group $H$ of isometric transformations acting on a configuration space of two rolling bodies. In the future we consider different rolling systems appearing in the robotics, or spline constructions related to the approximation of curves on Grassmann and/or Stiefel manifolds.


\end{document}